\newcommand{\mcI}{{\mathcal{I}}}
\newcommand{\Z}{{\mathbb{Z}}}
\newcommand{\Q}{{\mathbb{Q}}}
\newcommand{\C}{{\mathbb{C}}}
\DeclareMathOperator{\ld}{ld}
\DeclareMathOperator{\ini}{ini}
\DeclareMathOperator{\ord}{ord}
\DeclareMathOperator{\sep}{sep}
\DeclareMathOperator{\Ass}{Ass}
\DeclareMathOperator{\trdeg}{trdeg}
\DeclareMathOperator{\quot}{K}
\newtheorem{theorem}{Theorem}[section]
\newtheorem{lemma}[theorem]{Lemma}
\newtheorem{proposition}[theorem]{Proposition}
\theoremstyle{definition}
\newtheorem{example}[theorem]{Example}
\author{Markus Lange-Hegermann}
\address{Lehrstuhl B f\"ur Mathematik, RWTH Aachen University}
\email{\href{mailto:Markus Lange-Hegermann <markus.lange.hegermann@rwth-aachen.de>}{markus.lange.hegermann@rwth-aachen.de}}
\begin{document}

\title[Differential Dimension Polynomial]{The Differential Dimension Polynomial for Characterizable Differential Ideals}

\begin{abstract}
  We generalize the differential dimension polynomial from prime differential ideals to characterizable differential ideals.
  Its computation is algorithmic, its degree and leading coefficient remain differential birational invariants, and it decides equality of characterizable differential ideals contained in each other.
\end{abstract}

\keywords{differential dimension polynomial, characterizable ideal}
\subjclass[2010]{}

\maketitle

\section{Introduction}

Many systems of differential equations do not admit closed form solutions and hence cannot be solved symbolically.
Despite this, increasingly good heuristics to find solutions symbolically are implemented in computer algebra systems \cite{ChebTerrab1995102,ChebTerrabRoche2008}.
Given such a set of closed form solutions returned by a computer algebra system, the question remains whether this set is the complete solution set (cf. Example~\ref{example_heat_burgers}).

Classical measures like the Cartan characters \cite{Cartan1931} and Einstein's strength \cite{EinsteinRelativityAppenix2Supplement} describe the size of such solution sets.
However, they have a drawback: one can easily find two systems $S_1$ and $S_2$ of differential equations such that the solution set of $S_1$ is a proper subset of the solution set of $S_2$, but these two solution sets have identical measures (cf.\ Example~\ref{example_cartan_weak}).
In particular, these measures cannot detect whether a set returned by a solver of differential equations contains the difference of the solution sets of $S_2$ and $S_1$.

Kolchin introduced the differential dimension polynomial to solve this problem for solution sets of systems of differential equations corresponding to prime differential ideals \cite{KolPolynomial,JohnsonPolynomial,Kol,DifferentialDimensionPolynomials}.
This polynomial generalizes the Cartan characters and strength by counting the freely choosable power series coefficients of an analytical solution.
Recently, Levin generalized the differential dimension polynomial to describe certain subsets of the full solution set of a prime differential ideal \cite{LevinDimensionPolynomialIntermediateFields}.

Even though decomposing a set of differential equations into prime differential ideals is theoretically possible, it is expensive in practice (cf.\ \cite[\S6.2]{BLOP}).
Thus, there is a lack of practical methods which decide whether a subset of the solution set of differential equations is proper.
This paper solves this problem for greater generality than solution sets of prime differential ideals by generalizing the differential dimension polynomial to characterizable differential ideals.
Such ideals can be described by differential regular chains, and there exist reasonably fast algorithms that decompose a differential ideal into such ideals \cite{BLOP,thomasalg_jsc}.

To formulate the main theorem, we give some preliminary definitions; the missing definitions are given in Section~\ref{section_preliminaries}.
Denote by $F\{U\}$ a differential polynomial ring in $m$ differential indeterminates for $n$ commuting derivations over a differential field $F$ of characteristic zero.
For a differential ideal $I$ in $F\{U\}$ let $I_{\le \ell}:=I\cap F\{U\}_{\le \ell}$, where $F\{U\}_{\le \ell}$ is the subring of $F\{U\}$ of elements of order at most $\ell$.
We define the differential dimension function using the Krull dimension as
\begin{align*}
  \Omega_I:\Z_{\ge0}\mapsto\Z_{\ge0}:\ell\mapsto\dim(F\{U\}_{\le \ell}/I_{\le \ell})\mbox{ .}
\end{align*}
By the  following theorem, this function is ultimatively polynomial if $I$ is characterizable.
Such polynomials mapping $\Z$ to $\Z$ are called numerical polynomials, and there exists a natural total order $\le$ on them.

\begin{theorem}\label{theorem_general_dimension_polynomial}
  Let $I\subset F\{U\}$ be a characterizable differential ideal.
\begin{enumerate}
  \item There exists a numerical polynomial $\omega_I(\ell)\in\Q[\ell]$, called differential dimension polynomial, with $\omega_I(\ell)=\Omega_I(\ell)$ for sufficiently big $\ell\in\Z_{\ge 0}$.\label{theorem_general_dimension_polynomial_exists}
  \item $0\le\omega_I(\ell)\le m \binom{\ell+n}{n}$.
        In particular, $d_I:=\deg_\ell(\omega_I)\le n$.\label{theorem_general_dimension_polynomial_form}
  \item The degree $d_{I}$ and the coefficients $a_i$ for $i\ge d_I$ are invariant under differential birational maps, when writing $\omega_I(\ell)=\sum_{i=0}^na_i\binom{\ell+i}{i}$ with $a_i\in\Z$ for all $i\in\{0,\ldots,n\}$.\label{theorem_general_dimension_polynomial_invariant}
  \item The coefficient $a_n$ is the differential dimension of $F\{U\}/I$.\label{theorem_general_dimension_polynomial_differential_dimension}
\end{enumerate}
  Let $I\subseteq J\subset F\{U\}$ be another characterizable differential ideal.
\begin{enumerate}[resume]
  \item Then $\omega_J\le\omega_I$. \label{theorem_general_dimension_polynomial_two_ideals}
\end{enumerate}
  Assume $\omega_I=\omega_J$, and let $S$ and $S'$ be differential regular chains with respect to an orderly differential ranking $<$ that describe $I$ resp.\ $J$.
\begin{enumerate}[resume]
  \item The sets of leaders of $S$ and $S'$ coincide, and \label{theorem_general_dimension_polynomial_leaders}
  \item $I=J$ if and only if $\deg_x(S_x)=\deg_x(S_x')$ for all leaders $x$ of $S$, where $S_x$ is the unique element in $S$ of leader $x$. \label{theorem_general_dimension_polynomial_equality}
\end{enumerate}
\end{theorem}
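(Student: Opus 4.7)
The plan is to reduce every claim to a Hilbert-type counting statement for the complement of the monomial ideal of leaders of a differential regular chain. Fix a regular chain $S$ describing $I$ under the orderly ranking $<$, so $I$ is the saturation of $S$. For every $\ell$, saturation by the initials and separants of $S$ does not change Krull dimension, so $\dim(F\{U\}_{\le\ell}/I_{\le\ell})$ equals the Krull dimension of $F\{U\}_{\le\ell}$ modulo the algebraic ideal generated by the derivatives of $S$ of order $\le\ell$. Because $S$ is a regular chain under an orderly ranking, this dimension equals the number of derivatives of $U$ of order $\le\ell$ that are \emph{not} derivatives of any leader of $S$; equivalently, it is the counting function of the complement of the $\N^n$-graded monomial ideal $(\Theta\,\ld(S))$ in the polynomial algebra spanned by $\Theta U$. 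Kolchin's combinatorial lemma then yields a numerical polynomial of degree $\le n$ for large $\ell$, proving (1); the upper bound $m\binom{\ell+n}{n}$ in (2) is the total count of derivatives of order $\le\ell$. Part (4) follows because the leading term $a_n\binom{\ell+n}{n}$ counts the $u\in U$ no derivative of which is a leader of $S$, matching the standard definition of differential dimension.

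Part (5) is direct: $I\subseteq J$ induces surjections $F\{U\}_{\le\ell}/I_{\le\ell}\twoheadrightarrow F\{U\}_{\le\ell}/J_{\le\ell}$, hence $\Omega_J\le\Omega_I$ pointwise and therefore $\omega_J\le\omega_I$ in the natural order on numerical polynomials. For (3) I would reduce to the classical Kolchin-Johnson invariance theorem for prime differential ideals: take the minimal differential primes $\mathfrak{p}_1,\ldots,\mathfrak{p}_r$ of $I$ (each characterizable since $I$ is), use $I=\bigcap_i\mathfrak{p}_i$ to get $\Omega_I=\max_i\Omega_{\mathfrak{p}_i}$ pointwise, and conclude that the coefficients $a_i$ of $\omega_I$ for $i\ge d_I$ coincide with those of some $\omega_{\mathfrak{p}_j}$ maximizing the degree. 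A differential birational map permutes the minimal primes while preserving each $\omega_{\mathfrak{p}_i}$ by Kolchin-Johnson, so the high-degree coefficients of $\omega_I$ are preserved.

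For (6), assume $\omega_I=\omega_J$. By the Hilbert-polynomial viewpoint from (1), the counting polynomials of the complements of $(\Theta\,\ld(S))$ and $(\Theta\,\ld(S'))$ agree; combined with $I\subseteq J$ (so each $S_y\in I$ lies in $J$, constraining how $\ld(S')$ can differ from $\ld(S)$) and the strict monotonicity of the numerical-polynomial order under orderly rankings, this forces $\ld(S)=\ld(S')$. For (7), the ``only if'' direction is immediate since $\deg_x(S_x)$ is an invariant of the characterizable description once the leader set is fixed. For ``if'', given $I\subseteq J$, matching leaders, and matching leader-degrees, the combination $S_x-(\ini(S_x)/\ini(S'_x))S'_x$ has leader strictly below $x$ in the orderly ranking and reduces to zero modulo the rest of $S'$; hence $S_x$ lies in the differential ideal generated by $S'$, giving $J\subseteq I$.

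The main obstacle I anticipate is (6). The implication ``equal numerical dimension polynomial $\Rightarrow$ equal leader set'' is false for general monomial ideals (many distinct monomial ideals share a Hilbert polynomial), so the proof must crucially use both the orderliness of $<$ (to make the $\le$-order on $\omega$ strictly sensitive to each minimal generator of $(\Theta\,\ld(S))$) and the inclusion $I\subseteq J$ (to forbid any replacement of a leader by a lower-ranked derivative). A close second is (3), where one must track how a differential birational map acts on the primary decomposition of $I$ carefully enough to isolate the coefficients of degree $\ge d_I$ from the lower-order ones that may genuinely change.
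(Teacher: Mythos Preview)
Your treatment of parts \eqref{theorem_general_dimension_polynomial_exists}, \eqref{theorem_general_dimension_polynomial_form}, \eqref{theorem_general_dimension_polynomial_differential_dimension}, and \eqref{theorem_general_dimension_polynomial_two_ideals} matches the paper's: both reduce, via Rosenfeld's lemma and Lazard's lemma, to counting derivatives not under $\ld(S)$ and then invoke Kolchin's result for the prime case.

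For \eqref{theorem_general_dimension_polynomial_invariant} you take a genuinely different route. The paper argues directly on the filtration of $\quot(F\{U\}/I)$: it shows that the $\quot(F\{U\}_{\le\ell}/I_{\le\ell})$ embed in one another (this is where characterizability is used, since non-zero-divisors stay non-zero-divisors as $\ell$ grows), and that a differential birational map shifts this filtration by a bounded $\ell_0$, yielding $\omega_I(\ell)\le\omega_J(\ell+\ell_0)$ and symmetrically. Your approach instead decomposes $I$ into its minimal differential primes, observes $\Omega_I=\max_i\Omega_{\mathfrak p_i}$, and reduces to Kolchin--Johnson for each $\mathfrak p_i$ after noting that a differential birational map permutes the factors of $\quot(F\{U\}/I)\cong\prod_i\quot(F\{U\}/\mathfrak p_i)$. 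This works, and is arguably more transparent; the paper's argument has the advantage of being self-contained and not relying on the prime-ideal invariance as a black box.

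There are genuine gaps in \eqref{theorem_general_dimension_polynomial_leaders} and \eqref{theorem_general_dimension_polynomial_equality}. For \eqref{theorem_general_dimension_polynomial_leaders}, your own worry is justified: equality of the counting polynomials together with $I\subseteq J$ does \emph{not} let you compare the two monomial ideals $(\Theta\,\ld(S))$ and $(\Theta\,\ld(S'))$ without first knowing that $\ini(S_y)\notin J$ for each $y\in\ld(S)$, which is exactly what is at stake. The paper sidesteps this entirely. It works algebraically in $F\{U\}_{\le\ell}$: since $\mcI(S_{\le\ell})\subseteq\mcI(S'_{\le\ell})$ are radical, equidimensional, and of equal dimension, every minimal prime of $\mcI(S'_{\le\ell})$ is already a minimal prime of $\mcI(S_{\le\ell})$; Lazard's lemma then says the complement of the leaders is the \emph{same} transcendence basis for all associated primes of either ideal, forcing $\ld(S_{\le\ell})=\ld(S'_{\le\ell})$. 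No initial-nonvanishing hypothesis is needed.

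For \eqref{theorem_general_dimension_polynomial_equality}, your ``if'' argument goes in the wrong direction: you already know $S_x\in I\subseteq J$; what must be shown is $S'_x\in I$, i.e.\ $\mcI(S')\subseteq\mcI(S)$. Moreover, a single cancellation step $\ini(S'_x)S_x-\ini(S_x)S'_x$ only lowers $\deg_x$, not the leader, so one does not immediately land ``strictly below $x$''. The paper proves the algebraic statement (its Proposition~\ref{proposition_same_algebraic_ideals}) by Noetherian induction on the pair $(\ld(p),\deg_{\ld(p)}(p))$: take $p\in\mcI(S')$, strip off any initial lying in $\mcI(S)_{<\ld(p)}$, and pseudo-reduce by $S_{\ld(p)}$ to drop the degree; the induction hypothesis then places the remainder in $\mcI(S)$. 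Your ``only if'' direction also needs an argument: that $\deg_x(S_x)$ is determined by the ideal (and leader set) is precisely the content of Lemma~\ref{lemma_pseudo_reduction} applied in both directions, not an a priori invariant.
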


This theorem can be slightly strengthened, as $I\subseteq J$ and $\omega_I=\omega_J$ already imply $\deg_x(S_x)\le\deg_x(S_x')$ for all leader $x$ of $S$ (cf.\ Lemma~\ref{lemma_pseudo_reduction}).
Thus $I=J$ if and only if $\prod_{x}\deg_x(S_x)=\prod_{x}\deg_x(S_x')$.
It would be interesting to have a version of this theorem, where this product is an intrinsic value, similar to the leading differential degree \cite{GaoChowForm}.

A more detailed description of the content of this paper in the language of simple systems is a part of the author's thesis \cite{lhphd}.

Section~\ref{section_proof_dimension_polynomial} proves Theorem~\ref{theorem_general_dimension_polynomial}, Section~\ref{section_computation} discusses the computation of the differential dimension polynomial, and Section~\ref{section_examples} gives examples.

\section{Preliminaries}\label{section_preliminaries}

\subsection{Squarefree Regular Chains}

Let $F$ be a field of characteristic zero, $\overline{F}$ its algebraic closure, and $R:=F[y_1, \ldots, y_n]$ a polynomial ring.
We fix the total order, called ranking, $y_1<y_2<\ldots<y_n$ on $\{y_1,\ldots,y_n\}$.
The $<$-greatest variable $\ld(p)$ occuring in $p\in R\setminus F$ is called leader of $p$.
The coefficient $\ini(p)$ of the highest power of $\ld(p)$ in $p$ is called initial of $p$.
We denote the separant $\frac{\partial p}{\partial\ld(p)}$ of $p$ by $\sep(p)$.

Let $S\subset R\setminus F$ be finite.
Define $\ld(S):=\{\ld(p)|p\in S\}$ and similarly $\ini(S)$ and $\sep(S)$.
The set $S$ is called triangular if $|\ld(S)|=|S|$; in this case denote by $S_x\in S$ the unique polynomial with $\ld(S_x)=x$ for $x\in\ld(S)$.
We call the ideal $\mcI(S):=\langle S\rangle:\ini(S)^\infty\subseteq R$ the ideal associated to $S$.
Let $S_{<x}:=\{p\in S|\ld(p)<x\}$ for each $x\in\{y_1,\ldots,y_n\}$.
The set $S$ is called squarefree regular chain if it is triangular and neither $\ini(S_x)$ nor $\sep(S_x)$ is a zero divisor modulo $\mcI(S_{<x})$ for each $x\in\ld(S)$.

\begin{proposition}[{\cite[Prop.~5.8]{Hubert1})}]\label{proposition_general_non_zerodivisor}
  Let $S$ be a squarefree regular chain in $R$ and $1\le i\le n$.
  Then $\mcI(S_{<y_i})\cap F[y_1,\ldots,y_{i-1}] = \mcI(S)\cap F[y_1,\ldots,y_{i-1}]$.
  Furthermore, if $p\in F[y_1,\ldots,y_{i-1}]$ is not a zero-divisor modulo $\mcI(S_{<y_i})$, then $p$ is not a zero-divisor modulo $\mcI(S)$.
\end{proposition}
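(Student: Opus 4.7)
The plan is to handle both statements by a common induction on $k:=|\ld(S)|-|\ld(S_{<y_i})|$, relying on the recursive relation
$\mcI(S)=(\mcI(S_{<y})+\langle S_y\rangle):\ini(S_y)^\infty$
at the maximum leader $y\in\ld(S)$, which itself follows from the squarefree regular chain hypothesis on $S$. The case $k=0$ is trivial since then $S=S_{<y_i}$. For the step let $y\in\ld(S)$ be maximal, so $y\ge y_i$, and set $A:=F[y_1,\ldots,y_{i-1}]$. Since $(S_{<y})_{<y_i}=S_{<y_i}$, the inductive hypothesis supplies both statements for the smaller chain $S_{<y}$.

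Writing $y=y_j$, the ideal $\mcI(S_{<y})$ is generated by polynomials in $F[y_1,\ldots,y_{j-1}]$, so $R/\mcI(S_{<y})=R''[y,y_{j+1},\ldots,y_n]$ with $R'':=F[y_1,\ldots,y_{j-1}]/(\mcI(S_{<y})\cap F[y_1,\ldots,y_{j-1}])$. By hypothesis the image $\overline{\ini(S_y)}\in R''$ is a non-zero-divisor, and non-zero-divisors survive polynomial extensions, so $\overline{\ini(S_y)}$ remains a non-zero-divisor in $R''[y,y_{j+1},\ldots,y_n]$. Hence for any nonzero $\bar h$ in this polynomial ring, $\bar h\cdot\overline{S_y}$ has $y$-degree strictly greater than $\deg_y(\bar h)$.

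For the contraction equality, $\supseteq$ is immediate. Given $p\in\mcI(S)\cap A$, the recursive relation gives $\ini(S_y)^{k}p\equiv h\cdot S_y\pmod{\mcI(S_{<y})}$ for some $k$ and $h$. The left hand side is $y$-free, so the degree observation forces $h\in\mcI(S_{<y})$; then $\ini(S_y)^kp\in\mcI(S_{<y})$, and $p\in\mcI(S_{<y})$ by a further application of the non-zero-divisor property. Induction applied to $S_{<y}$ now yields $p\in\mcI(S_{<y_i})$.

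For the non-zero-divisor claim, induction gives that $p$ is a non-zero-divisor modulo $\mcI(S_{<y})$. If $pq\in\mcI(S)$, pseudo-reduce $q$ by $S_y$ in the variable $y$: $\ini(S_y)^{k'}q=gS_y+q'$ with $\deg_y(q')<\deg_y(S_y)$. Combining with $\ini(S_y)^kpq\in\mcI(S_{<y})+\langle S_y\rangle$ yields $pq'\in\mcI(S_{<y})+\langle S_y\rangle$; since $\deg_y(pq')<\deg_y(S_y)$, the degree argument forces the cofactor of $S_y$ to lie in $\mcI(S_{<y})$, so $pq'\in\mcI(S_{<y})$. By the inductive hypothesis $q'\in\mcI(S_{<y})$, whence $q\in(\mcI(S_{<y})+\langle S_y\rangle):\ini(S_y)^\infty=\mcI(S)$. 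The principal obstacle is the $y$-degree bookkeeping in $R''[y,y_{j+1},\ldots,y_n]$, in particular ensuring that $\overline{\ini(S_y)}$ remains a non-zero-divisor after adjoining the non-leader variables above $y$; with this in place, both claims follow from a single recursive scaffold.
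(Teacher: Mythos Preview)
The paper does not prove this proposition; it is quoted verbatim from \cite[Prop.~5.8]{Hubert1} and used as a black box. So there is no in-paper argument to compare against, and your inductive proof via the recursive description $\mcI(S)=(\mcI(S_{<y})+\langle S_y\rangle):\ini(S_y)^\infty$ together with $y$-degree bookkeeping in $R''[y,y_{j+1},\ldots,y_n]$ is the kind of direct argument that the cited source carries out.

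One small slip: from $\ini(S_y)^{k}pq\in\mcI(S_{<y})+\langle S_y\rangle$ and $\ini(S_y)^{k'}q=gS_y+q'$ you only get $\ini(S_y)^{k}pq'\in\mcI(S_{<y})+\langle S_y\rangle$, not $pq'$ itself, since $\ini(S_y)$ is not yet known to be regular modulo $\mcI(S_{<y})+\langle S_y\rangle$. The repair is immediate: apply the $y$-degree argument to $\ini(S_y)^{k}pq'$ (which has the same $y$-degree as $pq'$) to conclude $\ini(S_y)^{k}pq'\in\mcI(S_{<y})$, and then cancel $\ini(S_y)^{k}$ and $p$ successively using that both are non-zero-divisors modulo $\mcI(S_{<y})$. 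With this adjustment your argument is complete.
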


\begin{theorem}[Lazard's lemma, {\cite[Thm.~4.4, Coro.~7.3, Thm.~7.5]{Hubert1}, \cite[Thm.~1]{BLOP}}]\label{theorem:krull_counting}
  Let $S$ be a squarefree regular chain in $R$.
  Then $\mcI(S)$ is a radical ideal in $R$, and the set $\{y_1,\ldots,y_n\}\setminus \ld(S)$ forms a transcendence basis for every associated prime of $\mcI(S)$.
  Let such an associated prime $\mcI(S')$ be given by a squarefree regular chain $S'$.
  Then $\ld(S)=\ld(S')$ and, in particular, $R/\mcI(S)$ is equidimensional of dimension $n-|S|$.
\end{theorem}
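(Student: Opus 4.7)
The plan is to induct on $|\ld(S)|$. In the base case $S=\emptyset$, the ideal $\mcI(S)=(0)$ is prime and trivially satisfies all claims, with transcendence basis $\{y_1,\ldots,y_n\}$.

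For the inductive step, let $y:=\max\ld(S)$ and $T:=S_{<y}$, so $S=T\cup\{S_y\}$. By induction, $\mcI(T)$ is radical; let $\mathfrak{p}_1,\ldots,\mathfrak{p}_k$ be its associated (minimal) primes, each with transcendence basis $\{y_1,\ldots,y_n\}\setminus\ld(T)$. By Proposition~\ref{proposition_general_non_zerodivisor} together with the squarefree regular chain condition, $\ini(S_y)$ and $\sep(S_y)$ lie in no $\mathfrak{p}_i$. Since $y\notin\ld(T)$, the variable $y$ is transcendental over $K_i:=\quot(R/\mathfrak{p}_i)$. The image $\overline{S_y}\in K_i[y]$ has nonzero leading coefficient and nonzero formal $y$-derivative, so it is a squarefree univariate polynomial; factor it into distinct monic irreducibles $\bar q_{i,1},\ldots,\bar q_{i,r_i}$.

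Each pair $(i,j)$ yields a prime $\mathfrak{q}_{i,j}\subset R$ obtained by adjoining a preimage of $\bar q_{i,j}$ to $\mathfrak{p}_i$, minimal over $\mathfrak{p}_i+\langle S_y\rangle$ and avoiding $\ini(S_y)$. A Chinese remainder argument, after localization at $\ini(S_y)$, yields
\begin{align*}
  \mcI(S)=(\mcI(T)+\langle S_y\rangle):\ini(S_y)^\infty=\bigcap_{i,j}\mathfrak{q}_{i,j}\text{,}
\end{align*}
so $\mcI(S)$ is radical with exactly the $\mathfrak{q}_{i,j}$ as associated primes. Passing from $\mathfrak{p}_i$ to $\mathfrak{q}_{i,j}$ only turns $y$ algebraic over $K_i$, so each $\mathfrak{q}_{i,j}$ has transcendence basis $\{y_1,\ldots,y_n\}\setminus\ld(S)$; this proves equidimensionality at dimension $n-|S|=n-|\ld(S)|$. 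Finally, if $\mcI(S')$ is one of these associated primes described by a squarefree regular chain $S'$, then $\{y_1,\ldots,y_n\}\setminus\ld(S')$ is also a transcendence basis for the same quotient; comparing complements forces $\ld(S)=\ld(S')$.

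I expect the main obstacle to be the localization/Chinese remainder step: justifying that the squarefree factorization over each $K_i$ lifts to an honest primary decomposition of $(\mcI(T)+\langle S_y\rangle):\ini(S_y)^\infty$ without embedded or extraneous components, and that every associated prime of $\mcI(S)$ arises in this way. The non-zero-divisor hypotheses on $\ini(S_y)$ and $\sep(S_y)$ (propagated by Proposition~\ref{proposition_general_non_zerodivisor}) are exactly what is needed: the first rules out primes containing $\ini(S_y)$, and the second rules out repeated factors that would create non-reduced primary components.
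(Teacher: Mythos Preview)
The paper does not actually prove this theorem; it is quoted from the literature (Hubert and Boulier--Lazard--Ollivier--Petitot) and used as a black box. So there is no ``paper's own proof'' to compare against. That said, your inductive strategy---peel off the top element $S_y$, use the non-zero-divisor hypotheses on $\ini(S_y)$ and $\sep(S_y)$ to ensure $\overline{S_y}$ is squarefree over each residue field, factor, and reassemble via saturation/Chinese remainder---is exactly the standard argument in those references, so you are on the right track and your self-identified obstacle (controlling embedded components in the lift) is indeed the crux.

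One genuine imprecision worth fixing: you set $K_i:=\quot(R/\mathfrak{p}_i)$ and then speak of $y$ being transcendental over $K_i$ and of $\overline{S_y}\in K_i[y]$. But $y$ already lives inside $K_i$, so this is ill-formed as written. What you need is that every polynomial in $T$ has leader $<y$, hence $T\subset F[y_1,\ldots,y_{i-1}]$ where $y=y_i$; the associated primes $\mathfrak{p}_j$ of $\mcI(T)$ are then extended from primes $\mathfrak{p}_j'\subset F[y_1,\ldots,y_{i-1}]$, and you should set $K_j:=\quot\bigl(F[y_1,\ldots,y_{i-1}]/\mathfrak{p}_j'\bigr)$ so that $\overline{S_y}\in K_j[y]$ genuinely is a univariate polynomial over a field. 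With that adjustment the rest of your outline goes through. Also, the appeal to Proposition~\ref{proposition_general_non_zerodivisor} is not needed at the point you invoke it: the regular-chain hypothesis already says $\ini(S_y),\sep(S_y)$ are non-zero-divisors modulo $\mcI(T)$, which (since $\mcI(T)$ is radical by induction) is equivalent to lying in none of the $\mathfrak{p}_j$.
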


\subsection{Differential Algebra}

Let $F$ be a differential field of characteristic zero with pairwise commuting derivations $\Delta=\{\partial_1, \ldots, \partial_n\}$.
Let $U:=\{ u^{(1)}, \ldots, u^{(m)} \}$ be a set of differential indeterminates and define $u^{(j)}_\mu:=\partial^\mu u^{(j)}$ for $\partial^\mu := \partial^{\mu_1}_1\ldots\partial^{\mu_n}_n$, $\mu \in (\Z_{\ge 0})^n$.
For any set $S$ let $\{S\}_\Delta:=\{\partial^\mu s | s\in S, \mu \in (\Z_{\ge 0})^n\}$.
The differential polynomial ring $F\{U\}$ is the infinitely generated polynomial ring in the indeterminates $\{U\}_\Delta$.
The derivations $\partial_i: F \to F$ extend to $\partial_i: F\{U\} \to F\{U\}$ via additivity and Leibniz rule.
We denote the differential ideal generated by $p_1,\ldots,p_s\in F\{U\}$ by $\langle p_1,\ldots,p_s\rangle_\Delta$.

A ranking of the differential polynomial ring $F\{U\}$ is a total ordering $<$ on the set $\{U\}_\Delta$ satisfying additional properties (cf.\ e.g.\ \cite[p.\ 75]{Kol}).
A ranking $<$ is called orderly if $|\mu|<|\mu'|$ implies $u^{(j)}_{\mu} < u^{(j')}_{\mu'}$, where $|\mu|:=\mu_1+\ldots+\mu_n$.
In what follows, we fix an orderly ranking $<$ on $F\{U\}$.
The concepts of leader, initial and separant carry over to elements in the polynomial ring $F\{U\}$.

For a commutative ring $R$ denote the total quotient ring by $\quot(R)$.
Let $R$ and $R'$ be residue class rings of a differential polynomial ring by a differential ideal.
A differential birational map from $R$ to $R'$ is an isomorphism $\varphi: \quot(R) \to \quot(R')$ of $F$-algebras that commutes with derivations.
A differential transcendence basis $\{p_1,\ldots,p_d\}\subset R$ is a maximal set with $\biguplus_{i=1}^d\{p_i\}_\Delta$ algebraically independent over $F$.
The differential dimension of $R$ is the corresponding cardinality $d$.

A finite set $S\subset F\{U\}\setminus F$ is called (weakly) triangular if $\ld(p)$ is not a derivative of $\ld(p)$ for all $p,q\in S$, $p\not=q$.
Define $S_{<x}$ and $S_x$ as in the algebraic case.
We call $\mcI(S):=\langle S\rangle_\Delta:(\ini(S)\cup \sep(S))^\infty\subseteq F\{U\}$ the differential ideal associated to $S$.
The set $S$ is called coherent if the $\Delta$-polynomials of $S$ reduce to zero with respect to $S$ \cite{Rosenfeld}, and it is called differential regular chain if it is triangular, coherent, and if neither $\ini(S_x)$ nor $\sep(S_x)$ is a zero divisor modulo $\mcI(S_{<x})$ for each $x\in\ld(S)$.
An ideal $\mcI(S)$ is called characterizable if $S$ is a differential regular chain.

Let $S$ be a differential regular chain in $F\{U\}$, $\ell\in\Z_{\ge0}$, and $L:=\{\partial^\mu y|y\in \ld(S)\}\cap F\{U\}_{\le\ell}$ be the set of derivatives of leaders of elements in $S$ of order at most $\ell$.
For each $x\in L$ there exists a $\mu_{[x]}\in\Z_{\ge0}^n$ and a $p_{[x]}\in S$ such that $\ld(\partial^{\mu_{[x]}} p_{[x]})=x$.
Define an algebraic triangular set associated to $S$ as $S_{\le\ell}:=\{\partial^{\mu_{[x]}} p_{[x]}|x\in L\}$.
Although $S_{\le\ell}$ depends on the choice of $\mu_{[x]}$ and $p_{[x]}$, it has properties independent of the choice.

\begin{lemma}[Rosenfeld's lemma \cite{Rosenfeld}]\label{lemma_two_ways_to_get_algebraic_ideals}
  Let $S$ be a differential regular chain in $F\{U\}$, $\ell\in\Z_{\ge0}$, and $<$ orderly.
  Then $S_{\le\ell}$ is a squarefree regular chain and $\mcI_{F\{U\}_{\le \ell}}(S_{\le \ell})=\mcI(S)_{\le \ell}$.
\end{lemma}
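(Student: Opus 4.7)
The plan is to split the lemma into three independent tasks: verify triangularity of $S_{\le\ell}$, upgrade this to a squarefree regular chain, and establish the ideal equality. The first two are bookkeeping with the orderly ranking and Proposition~\ref{proposition_general_non_zerodivisor}; the third is the classical content of Rosenfeld's lemma and is where all the real work lies. Triangularity is immediate from the construction, since the definition picks for each $x\in L$ a single pair $(\mu_{[x]},p_{[x]})$ and the orderly ranking ensures $\ld(\partial^{\mu_{[x]}}p_{[x]})=x$, so that $\ld\colon S_{\le\ell}\to L$ is a bijection.

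For the squarefree regular chain property, I would first compute initials and separants of derivatives. For $p\in S$ with leader $y$ and any $\nu\neq 0$, orderliness forces $\ld(\partial^\nu p)=\partial^\nu y$ and $\partial^\nu p$ is linear in this new leader, so both $\ini(\partial^\nu p)$ and $\sep(\partial^\nu p)$ equal $\sep(p)$; for $\nu=0$ they are $\ini(p)$ and $\sep(p)$. Hence every initial and separant of $S_{\le\ell}$ is an initial or separant of some element of $S$. Ordering $S_{\le\ell}$ by its leaders and iterating Proposition~\ref{proposition_general_non_zerodivisor}, the non-zerodivisor hypothesis on $S$ transfers to $S_{\le\ell}$.

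The inclusion $\mcI_{F\{U\}_{\le\ell}}(S_{\le\ell})\subseteq\mcI(S)_{\le\ell}$ is immediate from $S_{\le\ell}\subseteq\{S\}_\Delta$ together with the identification of initials and separants above. For the converse, given $q\in\mcI(S)_{\le\ell}$, some power product $h$ of initials and separants of elements of $S$ satisfies $hq=\sum_j r_j\partial^{\nu_j}p_j$ with $p_j\in S$, $\nu_j\in\Z_{\ge 0}^n$, $r_j\in F\{U\}$, and the task is to rewrite this sum so that each $\partial^{\nu_j}p_j$ has order at most $\ell$; saturating at $h$ then places $q$ in $\mcI_{F\{U\}_{\le\ell}}(S_{\le\ell})$. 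Here coherence is essential: whenever two derivatives of elements of $S$ share a leader, their $\Delta$-polynomial reduces to zero modulo $S$, and this reduction is order-bounded because $<$ is orderly. A standard Rosenfeld double induction over the leader of the largest-order offender and its degree in that leader eliminates the overshoot one derivative at a time. Running this rigorously while preserving both the order bound $\le\ell$ and the saturation is the classical content of Rosenfeld's lemma and the main obstacle of the proof.
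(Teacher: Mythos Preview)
The paper does not supply its own proof of this lemma; it is recorded as a known result with a citation to Rosenfeld. Your outline follows the classical route and correctly identifies the ideal equality as the substantive part, with coherence and orderliness doing the work you describe.

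One comment on the structure: your second step is not quite as independent of the third as your plan suggests. The hypothesis on the differential regular chain $S$ gives non-zerodivisors modulo the \emph{differential} ideal $\mcI(S_{<\ld(p)})$, whereas the squarefree regular chain condition on $S_{\le\ell}$ asks for non-zerodivisors modulo the \emph{algebraic} ideal $\mcI((S_{\le\ell})_{<x})$ in the ambient polynomial ring $F\{U\}_{\le\ell}$. Passing from one to the other is exactly the ideal equality you postpone to step three; knowing that $q$ is regular modulo the larger differential ideal does not by itself force $r$ into the smaller algebraic ideal when $qr$ lies there. The standard resolution is to run the squarefree regular chain verification and the ideal equality together, by induction on the leader (or on $\ell$), so that at each stage the already-established equality for smaller leaders bridges the two notions. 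Your appeal to Proposition~\ref{proposition_general_non_zerodivisor} is then the right tool inside that induction, but it needs this scaffolding rather than a straight ``iteration'' after the fact.
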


\subsection{Numerical Polynomials}\label{subsection_numerical_polynomials}

Numerical polynomials are elements in the free $\Z$-module $\left\{\binom{\ell+k}{k}\in \Q[\ell]\middle| 0\le k\le n\right\}$, i.e., rational polynomials that map an integer to an integer. 
They are totally ordered by $p\le q$ if $p(\ell)\le q(\ell)$ for all $\ell$ sufficiently large.
Then $p\le q$ if and only if either $p=q$ or there is a $j\in\{0,\ldots,d-1\}$ such that $a_k=b_k$ for all $k>j$ and $a_j<b_j$, where $p=\sum_{k=0}^da_k\binom{\ell+k}{k}$ and $q=\sum_{k=0}^db_k\binom{\ell+k}{k}$.

\section{Proofs}\label{section_proof_dimension_polynomial}

\subsection{Existence Proof}\label{subsection_existence_proof}

Lemma~\ref{lemma_two_ways_to_get_algebraic_ideals} implies $I_{\le \ell}=\mcI(S_{\le \ell})$ and Theorem~\ref{theorem:krull_counting} states that $\dim(F\{U\}_{\le \ell}/I_{\le \ell})$ can be read off the number of polynomials in $S_{\le \ell}$, which only depends on $\ld(S)$.
Thus, to prove Theorem~\ref{theorem_general_dimension_polynomial}.\eqref{theorem_general_dimension_polynomial_exists} and \eqref{theorem_general_dimension_polynomial_form} we may assume $S=\ld(S)$.
In this case $\mcI(S)$ is a prime differential ideal, and hence the statements follow from Kolchin's original theorem \cite[\S II.12]{Kol}.

For the proof of \eqref{theorem_general_dimension_polynomial_differential_dimension} note that the transcendence basis of all associated primes of $\mcI(S)$ are equal by Theorem~\ref{theorem:krull_counting}, and for each of these associated prime the claim follows from Kolchin's original theorem.

To prove \eqref{theorem_general_dimension_polynomial_two_ideals} note that $I\subseteq J$ implies $I_{\le \ell}\subseteq J_{\le \ell}$ for all $\ell\ge 0$.
In particular, the map from $F\{U\}_{\le \ell}/I_{\le \ell}$ to $F\{U\}_{\le \ell}/J_{\le \ell}$ is surjective and, thus, $\dim(F\{U\}_{\le \ell}/I_{\le \ell})\ge \dim(F\{U\}_{\le \ell}/J_{\le \ell})$.

\subsection{Invariance Proof}

The differential polynomial ring $F\{U\}$ is filtered by the finitely generated $F$-algebras $F\{U\}_{\le \ell}$.
This filtration induces a filtration on $F\{U\}/I$ for a differential ideal $I$.
To prove the invariance statement in  Theorem~\ref{theorem_general_dimension_polynomial}.\eqref{theorem_general_dimension_polynomial_invariant} we show that this filtration extends to $\quot(F\{U\}/I)$ if $I$ is characterizable.
Thereby, standard techniques of filtrations can be adapted from Kolchin's proof.

\begin{example}
  Consider $\Delta=\{\partial_t\}$, $U=\{u,v\}$, and $I:=\langle u_0\cdot v_1\rangle_\Delta$.
  Then $u_0$ is no zero-divisor in $F\{U\}_{\le 0}/I_{\le 0}\cong F[u_0,v_0]$, but $u_0\cdot v_1=0$ in $F\{U\}/I$.
  So, the canonical map $\quot(F\{U\}_{\le 0}/I_{\le 0})\to \quot(F\{U\}_{\le 1}/I_{\le 1})=\quot(F[u_0,v_0,u_1,v_1]/\langle u_0\cdot v_1\rangle)$ is no inclusion, as $u_0^{-1}$ maps to zero.
\end{example}

\begin{lemma}\label{lemma_total_quotient_rings_filtered}
  Let $I\subseteq F\{U\}$ be a characterizable differential ideal and $\ell\in\Z_{\ge 0}$.
  Then, $F\{U\}_{\le \ell}/I_{\le \ell}\hookrightarrow F\{U\}_{\le \ell+1}/I_{\le \ell+1}$ induces an inclusion
  \begin{align*}
    \quot(F\{U\}_{\le \ell}/I_{\le \ell}) \hookrightarrow \quot(F\{U\}_{\le \ell+1}/I_{\le \ell+1})\mbox{ .}
  \end{align*}
\end{lemma}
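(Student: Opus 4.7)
The plan is to first note that $F\{U\}_{\le\ell}/I_{\le\ell} \to F\{U\}_{\le\ell+1}/I_{\le\ell+1}$ is already injective: since $I_{\le\ell+1}\cap F\{U\}_{\le\ell} = I\cap F\{U\}_{\le\ell} = I_{\le\ell}$, no new relations appear when we enlarge the ambient polynomial ring. Given this injectivity, a standard localization argument produces the induced injection on total quotient rings as soon as every non-zero-divisor in $F\{U\}_{\le\ell}/I_{\le\ell}$ stays a non-zero-divisor in $F\{U\}_{\le\ell+1}/I_{\le\ell+1}$. So the lemma reduces to preservation of non-zero-divisors under the filtration step.

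I would attack this via the squarefree-regular-chain picture. By Lemma~\ref{lemma_two_ways_to_get_algebraic_ideals}, $I_{\le\ell}=\mcI(S_{\le\ell})$ and $I_{\le\ell+1}=\mcI(S_{\le\ell+1})$ with both sides squarefree regular chains, and using the flexibility in the construction of $S_{\le\ell}$ and $S_{\le\ell+1}$ (the choice of the $\mu_{[x]}$, $p_{[x]}$) one may pick them compatibly so that $S_{\le\ell}\subseteq S_{\le\ell+1}$. Orderliness of $<$ is crucial here: every variable of $F\{U\}_{\le\ell}$ is strictly smaller than every variable of exact order $\ell+1$. Taking $y$ to be the smallest variable of order $\ell+1$ appearing in $F\{U\}_{\le\ell+1}$, the leaders of $S_{\le\ell+1}$ of order $\le\ell$ are exactly the leaders of $S_{\le\ell}$ and all lie $<y$, while the leaders of order $\ell+1$ lie $\ge y$; therefore $(S_{\le\ell+1})_{<y}=S_{\le\ell}$.

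Proposition~\ref{proposition_general_non_zerodivisor}, applied to $S_{\le\ell+1}$ with threshold $y$, then asserts that any $p\in F\{U\}_{\le\ell}$ that is not a zero-divisor modulo $\mcI_{F\{U\}_{\le\ell+1}}(S_{\le\ell})$ is not a zero-divisor modulo $\mcI(S_{\le\ell+1})=I_{\le\ell+1}$. It remains to identify $\mcI_{F\{U\}_{\le\ell+1}}(S_{\le\ell})$ with the extension of $I_{\le\ell}$ along the polynomial extension $F\{U\}_{\le\ell}\hookrightarrow F\{U\}_{\le\ell+1}$ (adjunction of the order-$(\ell+1)$ variables, which is free, hence flat), and to transfer the non-zero-divisor property across this polynomial extension by comparing coefficients in the added variables: if $p\in F\{U\}_{\le\ell}$ is a non-zero-divisor modulo $I_{\le\ell}$, then $p\cdot f\equiv 0$ in $(F\{U\}_{\le\ell}/I_{\le\ell})[\text{new variables}]$ forces each coefficient of $f$ to vanish.

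The step I expect to require the most care is this last identification: the saturation by $\ini(S_{\le\ell})$ must commute with the polynomial extension, so that $\mcI(S_{\le\ell})$ computed in the larger ring is literally the extension of $\mcI(S_{\le\ell})$ computed in the smaller ring. Freeness of $F\{U\}_{\le\ell+1}$ over $F\{U\}_{\le\ell}$ in the variables of order $\ell+1$ reduces both the saturation identity and the non-zero-divisor transfer to coefficient-wise comparisons, but one has to write these out explicitly to avoid conflating the ideals computed in the two different ambient rings.
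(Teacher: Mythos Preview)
Your proposal is correct and follows essentially the same route as the paper: both reduce the lemma to preservation of non-zero-divisors under the filtration step, invoke Proposition~\ref{proposition_general_non_zerodivisor} for that preservation, and then pass to total quotient rings via the universal property of localization. The only difference is emphasis: the paper states the non-zero-divisor preservation with a bare reference to Proposition~\ref{proposition_general_non_zerodivisor} and instead writes out the localization argument explicitly (factoring through the universal property and checking $\ker\iota=0$ via the ideal correspondence), whereas you do the reverse, calling the localization step ``standard'' and instead carefully unpacking how Proposition~\ref{proposition_general_non_zerodivisor} applies---arranging $S_{\le\ell}\subseteq S_{\le\ell+1}$ compatibly, using orderliness to identify $(S_{\le\ell+1})_{<y}=S_{\le\ell}$, and bridging the gap between $\mcI(S_{\le\ell})$ in the two ambient rings via the flat polynomial extension. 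Your extra care on that last point is warranted and not something the paper spells out.
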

\begin{proof}
  Any non-zero-divisor in $F\{U\}_{\le \ell}/I_{\le \ell}$ is a non-zero-divisor when considered in $F\{U\}_{\le \ell+1}/I_{\le \ell+1}$ (cf.\ Proposition~\ref{proposition_general_non_zerodivisor}), and thus a unit in $\quot(F\{U\}_{\le \ell+1}/I_{\le \ell+1})$.
  Hence, $F\{U\}_{\le \ell}/I_{\le \ell}\to \quot(F\{U\}_{\le \ell+1}/I_{\le \ell+1})$ factors over $\quot(F\{U\}_{\le \ell}/I_{\le \ell})$ by the universal property of localizations. 
  This induces a map $\iota:\quot(F\{U\}_{\le \ell}/I_{\le \ell})\to \quot(F\{U\}_{\le \ell+1}/I_{\le \ell+1})$.
  Now, $\ker\iota\cap F\{U\}_{\le \ell}/I_{\le \ell}$ is zero, since it is the kernel of the composition $F\{U\}_{\le \ell}/I_{\le \ell} \hookrightarrow F\{U\}_{\le \ell+1}/I_{\le \ell+1} \hookrightarrow \quot(F\{U\}_{\le \ell+1}/I_{\le \ell+1})$ of monomorphisms.
  The bijection between ideals in $F\{U\}_{\le \ell}/I_{\le \ell}$ not containing zero-divisors and all ideals in $\quot(F\{U\}_{\le \ell}/I_{\le \ell})$ implies $\ker\iota=0$.
\end{proof}

This filtration is well-behaved under differential birational maps.

\begin{lemma}\label{lemma_compatible_filtrations}
  Let $I\subseteq F\{U\}$ and $J\subseteq F\{V\}$ be characterizable differential ideals.
  Let $\varphi: \quot(F\{U\}/I) \to \quot(F\{V\}/J)$ be a differential birational map.
  Then there exists an $\ell_0\in\Z_{\ge 0}$ such that
  \begin{align*}
    \varphi(\quot(F\{U\}_{\le \ell}/I_{\le \ell})) &\subseteq \quot(F\{V\}_{\le \ell+\ell_0}/J_{\le \ell+\ell_0})
  \end{align*}
\end{lemma}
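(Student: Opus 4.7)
The strategy is to control the effect of $\varphi$ on the filtration by tracking where the finitely many differential generators $u^{(1)},\ldots,u^{(m)}$ go, and then propagating this along derivations and arithmetic. First I would establish that the total quotient ring $\quot(F\{V\}/J)$ is the filtered union $\bigcup_{\ell\ge 0}\quot(F\{V\}_{\le\ell}/J_{\le\ell})$ of the subrings supplied by Lemma~\ref{lemma_total_quotient_rings_filtered}. Any fraction $a/b\in\quot(F\{V\}/J)$ has $a,b\in F\{V\}_{\le\ell}/J_{\le\ell}$ for some $\ell$, and a non-zero-divisor in $F\{V\}/J$ restricts to one in this subring by injectivity of the inclusion $F\{V\}_{\le\ell}/J_{\le\ell}\hookrightarrow F\{V\}/J$; conversely, Lemma~\ref{lemma_total_quotient_rings_filtered} propagates the non-zero-divisor property upward in $\ell$. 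The analogous facts hold on the source side because $I$ is characterizable as well.

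Using this, I would choose $\ell_0\in\Z_{\ge 0}$ large enough that $\varphi(u^{(j)})\in\quot(F\{V\}_{\le\ell_0}/J_{\le\ell_0})$ for every $j\in\{1,\ldots,m\}$; such an $\ell_0$ exists by the filtered-union description together with the finiteness of $U$. A short induction on $|\mu|$, using that $\varphi$ commutes with each $\partial_i$ and that the quotient rule $\partial_i(a/b)=(b\partial_i a-a\partial_i b)/b^2$ raises the filtration level by at most one, then yields
\[\varphi(u^{(j)}_\mu)=\partial^\mu\varphi(u^{(j)})\in\quot(F\{V\}_{\le|\mu|+\ell_0}/J_{\le|\mu|+\ell_0}).\]
An arbitrary element of $\quot(F\{U\}_{\le\ell}/I_{\le\ell})$ has the form $p/q$ with $p,q$ polynomial expressions in the $u^{(j)}_\mu$ of order at most $\ell$ and $q$ a non-zero-divisor, so $\varphi(p)$ and $\varphi(q)$ both land in $\quot(F\{V\}_{\le\ell+\ell_0}/J_{\le\ell+\ell_0})$.

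The step I expect to require the most care is to confirm that $\varphi(q)^{-1}$ also lies in $\quot(F\{V\}_{\le\ell+\ell_0}/J_{\le\ell+\ell_0})$, rather than merely in $\quot(F\{V\}/J)$. I plan to write $\varphi(q)=a/b$ with $a,b\in F\{V\}_{\le\ell+\ell_0}/J_{\le\ell+\ell_0}$ and $b$ a non-zero-divisor, and then use that $\varphi(q)$ is a unit of $\quot(F\{V\}/J)$ to force $a$ to be a non-zero-divisor in $F\{V\}/J$, hence, by the first paragraph, also in $F\{V\}_{\le\ell+\ell_0}/J_{\le\ell+\ell_0}$. Then $\varphi(q)^{-1}=b/a$ is available at the desired filtration level, and multiplying gives $\varphi(p/q)\in\quot(F\{V\}_{\le\ell+\ell_0}/J_{\le\ell+\ell_0})$, completing the argument.
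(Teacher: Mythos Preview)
Your proof is correct and follows essentially the same approach as the paper's: both pick $\ell_0$ so that the images of the generators of $F\{U\}_{\le 0}$ land in $\quot(F\{V\}_{\le\ell_0}/J_{\le\ell_0})$ via the filtered-union description of the target, and then propagate along derivations---the paper packages this as the action of $F[\Delta]_{\le\ell}$ on the filtration, while you spell it out with the quotient rule and an induction on $|\mu|$. If anything, your version is more careful than the paper's about establishing $\quot(F\{V\}/J)=\bigcup_\ell \quot(F\{V\}_{\le\ell}/J_{\le\ell})$ and about why $\varphi(q)^{-1}$ remains at the correct filtration level.
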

\begin{proof}
  $F\{U\}/I$ is a (left) $F[\Delta]$-module for every differential ideal $I\subset F\{U\}$, where $F[\Delta]$ is the ring of linear differential operators with coefficients in $F$.
  The filtration of $F[\Delta]$ by the linear differential operators $F[\Delta]_{\le k}$ of order $\le k$ is compatible with the filtration of $F\{U\}$ in the sense that $F[\Delta]_{\le k}(F\{U\}_{\le\ell}/I_{\le\ell})=F\{U\}_{\le\ell+k}/I_{\le\ell+k}$.
  
  There exists an $\ell_0\in\Z_{\ge0}$ with $\varphi(F\{U\}/I_{\le 0}) \subseteq \quot(F\{V\}/J_{\le \ell_0})$, as $F\{V\}/J=\bigcup_{\ell\in\Z_{\ge0}}F\{V\}_{\le\ell}/J_{\le\ell}$.
  Now
  \begin{align*}
    \varphi(\quot(F\{U\}/I_{\le \ell})) 
      &=         \varphi(\quot(F[\Delta]_{\le \ell}(F\{U\}/I_{\le 0})))\\
      &=         \quot(F[\Delta]_{\le \ell}\varphi(F\{U\}/I_{\le 0}))\\
      &\subseteq \quot(F[\Delta]_{\le \ell}\quot(F\{V\}/J_{\le \ell_0}))\\
      &\subseteq \quot(F\{V\}/J_{\le \ell+\ell_0})
  \end{align*}
\end{proof}

The Krull-dimension changes when passing to total quotient rings.
Instead, we use $\dim_F(R):=\max_{P\in\Ass(R)}\trdeg_F(\quot(R/P))$ as notion of dimension for $F$-algebras $R$.
Then, $\dim(R)=\dim_F(R)=\dim_F(\quot(R))$ allows to prove the invariance condition.

\begin{proof}[Proof of Theorem~\ref{theorem_general_dimension_polynomial}.\eqref{theorem_general_dimension_polynomial_invariant}]\label{theorem_general_dimension_polynomial_invariant_proof}
  Let $\varphi$ be as in Lemma~\ref{lemma_compatible_filtrations}.
  Then,
  \begin{align*}
    \quot(F\{U\}_{\le \ell}/I_{\le \ell}) &\cong \varphi(\quot(F\{U\}_{\le \ell}/I_{\le \ell})) \subseteq \quot(F\{V\}_{\le \ell+\ell_0}/J_{\le \ell+\ell_0})
  \end{align*}
  with the $\ell_0\in\Z_{\ge0}$ from Lemma~\ref{lemma_compatible_filtrations}, and thus
  \begin{align*}
    \dim(F\{U\}_{\le \ell}/I_{\le \ell})
      &= \dim_F(\quot(F\{U\}_{\le \ell}/I_{\le \ell}))\\
      &\le \dim_F(\quot(F\{V\}_{\le \ell+\ell_0}/J_{\le \ell+\ell_0}))\\
      &= \dim(F\{V\}_{\le \ell+\ell_0}/J_{\le \ell+\ell_0})\mbox{ .}
  \end{align*}
  Thus $\omega_I(\ell)\le \omega_J(\ell+\ell_0)$ and by symmetry $\omega_J(\ell)\le \omega_I(\ell+\ell_0)$.
  Now, an elementary argument implies that the degrees and leading coefficients of $\omega_I$ and $\omega_J$ agree.
\end{proof}

\subsection{Comparison Proof}

The proof of Theorem~\ref{theorem_general_dimension_polynomial}.\eqref{theorem_general_dimension_polynomial_leaders} and \eqref{theorem_general_dimension_polynomial_equality} uses two propositions, which relate ideals and squarefree regular chains.
The first proposition is a direct corollary to Lazard's Lemma (Theorem~\ref{theorem:krull_counting}).

\begin{proposition}\label{proposition_same_trancendece_basis}
  Let $S,S'$ be squarefree regular chains in $F[y_1,\ldots,y_n]$ with $\mcI(S)\subseteq\mcI(S')$ and $\left|S\right|=\left|S'\right|$.
  Then $\ld(S)=\ld(S')$.
\end{proposition}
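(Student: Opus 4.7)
The plan is to show that $\ld(S)$ is intrinsically determined by any minimal prime of $\mcI(S)$ together with the fixed ranking, and then to locate a prime that is simultaneously a minimal prime of both $\mcI(S)$ and $\mcI(S')$. For the latter, let $P'$ be a minimal prime of $\mcI(S')$. Since $\mcI(S)$ is radical by Theorem~\ref{theorem:krull_counting} and $\mcI(S)\subseteq\mcI(S')\subseteq P'$, the prime $P'$ contains some minimal prime $Q$ of $\mcI(S)$. By Theorem~\ref{theorem:krull_counting} both residue rings have transcendence degree $n-|S|=n-|S'|$ over $F$, and in the finitely generated $F$-algebra domain $R/Q$ a strict prime inclusion strictly decreases transcendence degree. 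Hence $Q=P'$, so $P'$ is a minimal prime of $\mcI(S)$ as well.

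The intrinsic characterization I would prove is that for any minimal prime $P$ of $\mcI(S)$,
\begin{align*}
  \ld(S)=\{y_i\mid y_i\text{ is algebraic over }F(y_1,\ldots,y_{i-1})\bmod P\}\mbox{ .}
\end{align*}
For ``$\subseteq$'', if $y_i\in\ld(S)$ then the polynomial $p\in S$ with $\ld(p)=y_i$ lies in $P$, and $\ini(p)$ is a non-zero-divisor modulo $\mcI(S)$ by the regular chain axiom together with Proposition~\ref{proposition_general_non_zerodivisor}, so $\ini(p)\notin P$ and $p$ witnesses algebraicity of $y_i$ over $F(y_1,\ldots,y_{i-1})$ modulo $P$. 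For ``$\supseteq$'', suppose $y_i\notin\ld(S)$; then $y_i$ belongs to the transcendence basis $B:=\{y_1,\ldots,y_n\}\setminus\ld(S)$ of $R/P$ provided by Theorem~\ref{theorem:krull_counting}. A short induction on $j\in\{1,\ldots,i-1\}$, using the ``$\subseteq$'' direction already established at smaller indices, shows that every $y_j\in\ld(S)\cap\{y_1,\ldots,y_{i-1}\}$ is algebraic over $F(B\cap\{y_1,\ldots,y_{i-1}\})$ modulo $P$; consequently $F(y_1,\ldots,y_{i-1})$ is algebraic over $F(B\cap\{y_1,\ldots,y_{i-1}\})$ modulo $P$, over which $y_i$ is transcendental by algebraic independence of $B$. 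Since the right-hand side of the displayed equality depends only on $P$ and the ranking, applying the characterization to both $S$ and $S'$ at the common minimal prime $P'$ yields $\ld(S)=\ld(S')$.

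The main obstacle is the ``$\supseteq$'' direction above. Lazard's lemma supplies algebraic independence of $B$ as a whole set, not transcendence of $y_i$ over all variables strictly smaller than $y_i$ in the ranking; upgrading the former to the latter requires systematically replacing each smaller leader by the smaller non-leaders it is algebraic over via a chain of algebraic extensions, which is precisely what the inner induction accomplishes. Once this step is in place, the rest is a routine combination of Lazard's lemma with standard dimension theory for finitely generated $F$-algebras.
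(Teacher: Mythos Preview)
Your proof is correct and follows the same high-level strategy the paper has in mind: locate a common minimal prime of $\mcI(S)$ and $\mcI(S')$ via the equidimensionality in Lazard's lemma, and then show that the set of leaders is intrinsically determined by that prime together with the ranking. The paper treats this proposition as an immediate corollary of Theorem~\ref{theorem:krull_counting} because the last clause of that theorem---that any associated prime of $\mcI(S)$ described by a squarefree regular chain has the same leaders as $S$---already encodes exactly the intrinsic characterization you establish by hand. So your inductive argument for the ``$\supseteq$'' direction is correct but re-derives, from the transcendence-basis statement of Lazard's lemma, a consequence the paper is simply quoting; once one uses the full strength of Theorem~\ref{theorem:krull_counting}, the proposition is indeed a one-line corollary.
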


The following lemma is used to prove the second proposition.
It captures an obvious property of the pseudo reduction with respect to a squarefree regular chain.

\begin{lemma}\label{lemma_pseudo_reduction}
  Let $S$ be a squarefree regular chain and $p\in F[y_1,\ldots,y_n]$ with $\ld(p)=x$, $p\in\mcI(S)$, and $\ini(p)\not\in\mcI(S)$.
  Then $S$ has an element of leader $x$ and $\deg_x(S_x)\le\deg_x(p)$.
\end{lemma}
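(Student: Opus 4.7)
I would prove the contrapositive: assume that either $S$ has no element with leader $x$, or $S_x$ exists with $\deg_x(S_x)>\deg_x(p)$, and deduce $\ini(p)\in\mcI(S)$, contradicting the hypothesis. The starting observation in both cases is that the leading $x$-part of $p$ cannot be reduced: if $S_x$ is absent, nothing acts on the $x$-terms of $p$; if $\deg_x(S_x)>\deg_x(p)$, then $p$ is already reduced modulo $S_x$ in the variable $x$. Hence pseudo-reducing $p$ by $S$ invokes only elements of $S_{<x}$.

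Carrying out this pseudo-reduction yields an identity $c\cdot p = a + r$, with $c$ a product of initials of elements of $S_{<x}$, $a\in\langle S_{<x}\rangle$, and $r$ reduced with respect to $S_{<x}$. Writing $x=y_j$, every ingredient of the reduction lies in $F[y_1,\ldots,y_{j-1}]$ or $F[y_1,\ldots,y_j]$, so $\deg_x(r)\le\deg_x(p)$ and $r\in F[y_1,\ldots,y_j]$. Consequently $r$ is also reduced with respect to $S_x$ (if present) and trivially with respect to $S_{>x}$, so $r$ is reduced with respect to the whole of $S$. Since $p\in\mcI(S)$ and $a\in\langle S_{<x}\rangle\subseteq\mcI(S)$, the difference $r=cp-a$ also lies in $\mcI(S)$. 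Invoking the normal-form property---that the only polynomial in $\mcI(S)$ reduced with respect to $S$ is zero---forces $r=0$, whence $c\cdot p\in\langle S_{<x}\rangle$.

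To finish, I extract the coefficient of $x^{\deg_x(p)}$ on both sides. On the left it equals $c\cdot\ini(p)\in F[y_1,\ldots,y_{j-1}]$. On the right, since $F[y_1,\ldots,y_n]$ is free over $F[y_1,\ldots,y_{j-1}]$ on the monomials in $y_j,\ldots,y_n$ and $\langle S_{<x}\rangle$ is generated by elements of $F[y_1,\ldots,y_{j-1}]$, this coefficient lies in $\langle S_{<x}\rangle\cap F[y_1,\ldots,y_{j-1}]\subseteq \mcI(S_{<x})\cap F[y_1,\ldots,y_{j-1}]$. Proposition~\ref{proposition_general_non_zerodivisor} identifies this intersection with $\mcI(S)\cap F[y_1,\ldots,y_{j-1}]$, and by the same proposition each factor of $c$, and hence $c$ itself, is a non-zero-divisor modulo $\mcI(S)$. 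Therefore $\ini(p)\in\mcI(S)$, the required contradiction.

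The main obstacle is the appeal to the normal-form property that any polynomial reduced with respect to a squarefree regular chain and contained in $\mcI(S)$ must vanish. This is a standard structural fact (proved via pseudo-reduction together with Lazard's lemma, or equivalently by exhibiting the reduced monomials as an $F[y_i\mid y_i\notin\ld(S)]$-basis of $F[y_1,\ldots,y_n]/\mcI(S)$), but it is the substantive algebraic ingredient here; I would cite it from Hubert rather than re-derive it inline.
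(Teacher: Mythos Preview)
The paper does not actually supply a proof of this lemma; it merely records it as ``an obvious property of the pseudo reduction with respect to a squarefree regular chain.'' Your argument is a correct and carefully worked-out version of exactly that pseudo-reduction idea: under either branch of the contrapositive the polynomial $p$ is already reduced with respect to $S_x$ (and trivially with respect to $S_{>x}$), so the pseudo-remainder computation uses only $S_{<x}$; the membership criterion (the only element of $\mcI(S)$ reduced with respect to $S$ is $0$) then forces $c\cdot p\in\langle S_{<x}\rangle$, and extracting the top $x$-coefficient together with Proposition~\ref{proposition_general_non_zerodivisor} gives $\ini(p)\in\mcI(S)$. The one external fact you invoke, the normal-form/membership characterization for (squarefree) regular chains, is indeed standard and available in Hubert's notes, so citing it is appropriate.
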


\begin{proposition}\label{proposition_same_algebraic_ideals}
  Let $S$ and $S'$ be squarefree regular chains in $R=F[y_1,\ldots,y_n]$ with $\mcI(S)\subseteq\mcI(S')$ and $\left|S\right|=\left|S'\right|$.
  Then, $\mcI(S)=\mcI(S')$ if and only if $\deg_x(S_x)=\deg_x(S_x')$ for all $x\in\ld(S)=\ld((S'))$.
\end{proposition}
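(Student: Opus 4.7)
The equality $\ld(S) = \ld(S')$ asserted in the statement is immediate from Proposition~\ref{proposition_same_trancendece_basis}, so I would prove the two directions of the iff separately. For the forward implication, suppose $\mcI(S) = \mcI(S')$ and fix $x \in \ld(S) = \ld(S')$. The initial $\ini(S_x')$ is a non-zero-divisor modulo $\mcI(S'_{<x})$ by the squarefree regular chain property and hence, by Proposition~\ref{proposition_general_non_zerodivisor}, also modulo $\mcI(S') = \mcI(S)$; since $\mcI(S) \ne R$ by Lazard's Lemma, this forces $\ini(S_x') \notin \mcI(S)$. Then $S_x' \in \mcI(S)$ has leader $x$, so Lemma~\ref{lemma_pseudo_reduction} yields $\deg_x(S_x) \le \deg_x(S_x')$, and swapping $S$ with $S'$ gives the reverse inequality.

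For the reverse implication, assume the degrees agree. Given $p \in \mcI(S')$, I plan to pseudo-reduce $p$ by $S$ to write $hp = q + r$, where $h$ is a product of powers of initials of $S$, $q \in \langle S\rangle$, and $r$ is reduced with respect to $S$. Because the degrees match, $r$ is also reduced with respect to $S'$, and $r = hp - q \in \mcI(S') + \mcI(S) = \mcI(S')$. Once I argue $r = 0$, it follows that $hp \in \langle S\rangle \subseteq \mcI(S)$, and the saturation by $\ini(S)$ in the definition of $\mcI(S)$ promotes this to $p \in \mcI(S)$, giving $\mcI(S') \subseteq \mcI(S)$ and hence the desired equality.

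The reverse implication therefore hinges on the following auxiliary claim: for any squarefree regular chain $\tilde S$ in $R$, every element of $\mcI(\tilde S)$ reduced with respect to $\tilde S$ is zero. I plan to prove this by strong induction on the least $i$ with $r \in F[y_1,\ldots,y_i]$, allowing the chain $\tilde S$ to vary. The base $i = 0$ uses $F \cap \mcI(\tilde S) = 0$, which holds because Lazard forbids $\mcI(\tilde S) = R$. For the inductive step, if $y_i \in \ld(\tilde S)$ and $\ld(r) = y_i$, reducedness gives $\deg_{y_i}(r) < \deg_{y_i}(\tilde S_{y_i})$, so Lemma~\ref{lemma_pseudo_reduction} forces $\ini(r) \in \mcI(\tilde S)$; but $\ini(r)$ is reduced with respect to $\tilde S$ and lies in $F[y_1,\ldots,y_{i-1}]$, so induction forces $\ini(r) = 0$, contradicting its definition (the subcase $\ld(r) < y_i$ is handled by applying the hypothesis directly to $r$). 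If $y_i \notin \ld(\tilde S)$, then $\tilde S_{<y_{i+1}} = \tilde S_{<y_i}$, and Proposition~\ref{proposition_general_non_zerodivisor} combined with the fact that $\mcI(\tilde S_{<y_i})$ is extended from an ideal of $F[y_1,\ldots,y_{i-1}]$ (its generators and their initials all live there) shows that any element of $\mcI(\tilde S) \cap F[y_1,\ldots,y_i]$ has the form $\sum_k c_k y_i^k$ with $c_k \in \mcI(\tilde S_{<y_i}) \cap F[y_1,\ldots,y_{i-1}]$; each $c_k$ is reduced with respect to $\tilde S_{<y_i}$, so the induction hypothesis applied to the chain $\tilde S_{<y_i}$ gives $c_k = 0$ and hence $r = 0$.

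The main obstacle will be setting up the induction so that its hypothesis quantifies uniformly over all squarefree regular chains in $R$, since the descent from $\tilde S$ to $\tilde S_{<y_i}$ genuinely changes the chain at each level. A related subtlety is the extension claim for $\mcI(\tilde S_{<y_i})$, which relies on $\tilde S_{<y_i}$ and all its initials lying in the smaller polynomial ring $F[y_1,\ldots,y_{i-1}]$, together with flatness of the polynomial extension $F[y_1,\ldots,y_{i-1}] \subseteq R$.
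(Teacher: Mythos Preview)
Your argument is correct. The forward direction matches the paper's almost verbatim: both use Lemma~\ref{lemma_pseudo_reduction} applied to $S_x'$ (resp.\ $S_x$) together with the observation that its initial is not in the common ideal, then swap roles.

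For the reverse direction you take a genuinely different route from the paper. The paper proceeds by a single Noetherian induction on the pair $(\ld(p),\deg_{\ld(p)}(p))$: given $p\in\mcI(S')$, it strips off $\ini(p)$ if it already lies in $\mcI(S)$, otherwise invokes Lemma~\ref{lemma_pseudo_reduction} to perform one pseudo-division step by $S_{\ld(p)}$ and appeals to the induction hypothesis on the resulting element of smaller rank. Your approach instead performs the full pseudo-reduction of $p$ by $S$ at once and isolates the key content as a standalone lemma, namely that a squarefree regular chain is a characteristic set of its saturated ideal (every reduced member of $\mcI(\tilde S)$ vanishes). This is a well-known property of regular chains, and your inductive proof of it is sound; the only place requiring care is the case $y_i\notin\ld(\tilde S)$, where you correctly combine Proposition~\ref{proposition_general_non_zerodivisor} with the fact that $\mcI(\tilde S_{<y_i})$ is extended from $F[y_1,\ldots,y_{i-1}]$ (freeness of $R$ over $F[y_1,\ldots,y_{i-1}]$ suffices here) to peel off the $y_i$-coefficients. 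The payoff of your route is a reusable lemma and a cleaner separation of concerns; the paper's route is shorter and avoids the extension argument by folding everything into one induction. Note incidentally that once you have Proposition~\ref{proposition_general_non_zerodivisor} the chain need not vary in your induction: since $c_k\in\mcI(\tilde S_{<y_i})\cap F[y_1,\ldots,y_{i-1}]=\mcI(\tilde S)\cap F[y_1,\ldots,y_{i-1}]$, you may apply the hypothesis at level $i-1$ to the same chain $\tilde S$.
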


\begin{proof}
  Let $\deg_x(S_x)=\deg_x(S_x')$ for all $x\in\ld(S)$.
  We show $\mcI(S)\supseteq\mcI(S')$ by a Noetherian induction.
  The statement is clear for the principle ideals $\mcI(S_{<y_2})$ and $\mcI(S_{<y_2}')$.
  Let $p\in \mcI(S')$ with $\ld(p)=y_i$ and $\deg_{y_i}(p)=j$.
  Assume by induction that $q\in\mcI(S')$ implies $q\in\mcI(S)$ for all $q$ with $\ld(q)<y_i$ or $\ld(q)=y_i$ and $\deg_{y_i}(q)<j$.
  Without loss of generality $\ini(p)\not\in\mcI(S')_{<y_i}=\mcI(S)_{<y_i}$, as otherwise $p$ has a lower degree in $y_i$ or a lower ranking leader when substituting $\ini(p)$ by zero.
  Now, Lemma~\ref{lemma_pseudo_reduction} implies $y_i\in\ld(S)$ and $\deg_{y_i}(p)\ge\deg_{y_i}(S_{y_i})$.
  Then,
  \[
    r:=\ini(S_{y_i})\cdot p-\ini(p)\cdot y_i^{\deg_{y_i}(p)-\deg_{y_i}(S_{y_i})}\cdot S_{y_i}
  \]
  is in $\mcI(S)$ if and only if $p\in \mcI(S)$ is, but $r$ is of lower degree or of lower ranking leader than $p$.
  The claim follows by induction.
  
  Let $\mcI(S)=\mcI(S')$ and $x\in\ld(S)$.
  This implies $\ini(S_x)\not\in\mcI(S')$, and thus $\deg_x(S'_x)\le\deg_x(S_x)$ by Lemma~\ref{lemma_pseudo_reduction}.
  By symmetry $\deg_x(S'_x)\ge\deg_x(S_x)$, and thus $\deg_x(S_x)=\deg_x(S_x')$.
\end{proof}

\begin{proof}[Proof of Theorem~\ref{theorem_general_dimension_polynomial}.\eqref{theorem_general_dimension_polynomial_equality}]\label{theorem_general_dimension_polynomial_equality_proof}
  Lemma~\ref{lemma_two_ways_to_get_algebraic_ideals} reduces the statements to the algebraic case.
  In this case, Proposition~\ref{proposition_same_trancendece_basis} implies \eqref{theorem_general_dimension_polynomial_leaders}, and \eqref{theorem_general_dimension_polynomial_equality} follows from Proposition~\ref{proposition_same_algebraic_ideals}, because all polynomials in $S_{\le \ell}$ of degree greater than one in their respective leader already lie in $S$, $\ell\in\Z_{\ge 0}$.
\end{proof}

\section{Computation of the Differential Dimension Polynomial}\label{section_computation}

To compute the differential dimension polynomial $\omega_{\mcI(S)}$ of a characterizable differential ideal $\mcI(S)\subseteq F\{U\}$ for a differential regular chain $S$ we may assume $S=\ld(S)$ (cf.\ Subsection~\ref{subsection_existence_proof}).
This assumption implies that $\mcI(S)$ is a prime differential ideal, and for this case there exist well-known combinatorial algorithms for $\omega_{\mcI(S)}$ \cite{DifferentialDimensionPolynomials}.

Alternatively, the differential dimension polynomial $\omega_{\mcI(S)}$ can be read off the set of equations $S$ of a simple differential system \cite{thomasalg_jsc}.
Such a set $S$ is almost a differential regular chain, except that weak triangularity is replaced by the Janet decomposition, which associates a subset of $\Delta$ of cardinality $\zeta_p$ to each $p\in S$.
Then, the differential dimension polynomial is given by the closed formula
\begin{align*}
  \omega_{\mcI(S)}(l)=m\binom{n+\ell}{n}-\sum_{p\in S}\binom{\zeta_p+\ell-\ord(\ld(p))}{\zeta_p}\mbox{ ,}
\end{align*}
involving only the cardinalities $\zeta_p$ and the orders $\ord(\ld(p))$.

\section{Examples}\label{section_examples}

For each prime differential ideal $I$ there exists a differential regular chain $S$ with $I=\mcI(S)$.
Thus, the differential dimension polynomial defined in Theorem~\ref{theorem_general_dimension_polynomial} includes the version of Kolchin.
However, the following example shows that Theorem~\ref{theorem_general_dimension_polynomial} is more general.

\begin{example}\label{example_dimpoly_generalize}
  Consider $U=\{u,v\}$, $\Delta=\{\partial_t\}$, $p=u_1^2-v$, and $q=v_1^2-v$.
  The characterizable differential ideal $I:=\mcI(\{p,q\})$ is not prime, as $p-q=(u_1-v_1)(u_1+v_1)$.
\end{example}

Prime differential ideals $I\subseteq J$ are equal if and only if $\omega_I=\omega_J$ by Kolchin's theorem.
By the following example, this is wrong for characterizable ideals and any generalization to such ideals needs to consider the degrees of polynomials in a differential regular chain.

\begin{example}
  Consider $\langle u_0^2-u_0\rangle_\Delta=\mcI(\{u_0^2-u_0\})\subsetneq\langle u_0\rangle_\Delta=\mcI(\{u_0\}$ in $F\{u\}$ for $|\Delta|=1$.
  Both differential ideals are characterizable and have the differential dimension polynomial $0$.
  However, they are not equal.
\end{example}

The next example shows that the Cartan characters and other invariants do not suffice to prove that two solution sets are not equal.

\begin{example}\label{example_cartan_weak}
  For $\Delta=\{\partial_x,\partial_y\}$ consider the regular chains $S_1=\{u_{1,0}\}$ and $S_2=\{u_{2,0},u_{1,1}\}$ in $\C\{u\}$.
  Then $\mcI(S_2)\subseteq\mcI(S_1)$.
  The strength and first Cartan character are one and the second Cartan character and differential dimension are zero for both ideals (in any order high enough), i.e., these values agree for both ideals.
  However, $\mcI(S_2)\not=\mcI(S_1)$, as $\omega_{\mcI(S_1)}(\ell)=l+1\not=l+2=\omega_{\mcI(S_2)}(\ell)$.
\end{example}

In the last example, the differential dimension polynomial proves that a symbolic differential equation solver does not find all solutions.

\begin{example}\label{example_heat_burgers}
  Let $U=\{u\}$ and $\Delta=\{\frac{\partial}{\partial t},\frac{\partial}{\partial x}\}$.
  The viscous {\sc Burgers}' equation $b=u_{0,2}-u_{1,0}-2u_{0,1}\cdot u_{0,0}$ has differential dimension polynomial is $2\ell+1$.
  \textsf{MAPLE}'s \textsf{pdsolve} \cite{maple17} finds the set
  \begin{align*} 
    T:=\Big\{c_1\tanh(c_1x+c_2t+c_3)-\frac{c_2}{2c_1}\Big|c_1, c_2, c_3\in\C, c_1\not=0\Big\}
  \end{align*}
  of solutions, which only depends on three parameters.
  The differential dimension polynomial shows that the set of solutions is infinite dimensional, and hence $T$ is only a small subset of all solutions.
\end{example}

\bibliographystyle{plain}

\begin{thebibliography}{10}

\bibitem{maple17}
Maple 17.00.
\newblock Maplesoft, a division of Waterloo Maple Inc., Waterloo, Ontario.

\bibitem{thomasalg_jsc}
T.~B{\"a}chler, V.~P. Gerdt, M.~Lange-Hegermann, and D.~Robertz.
\newblock Algorithmic {T}homas decomposition of algebraic and differential
  systems.
\newblock {\em J. Symbolic Comput.}, 47(10):1233--1266, 2012.
\newblock (\href{http://arxiv.org/abs/1108.0817}{\texttt{arXiv:1108.0817}}).

\bibitem{BLOP}
F.~Boulier, D.~Lazard, F.~Ollivier, and M.~Petitot.
\newblock Computing representations for radicals of finitely generated
  differential ideals.
\newblock {\em Appl. Algebra Engrg. Comm. Comput.}, 20(1):73--121, 2009.

\bibitem{Cartan1931}
{\'E}.~Cartan.
\newblock Sur la th\'eorie des syst\`emes en involution et ses applications \`a
  la relativit\'e.
\newblock {\em Bulletin de la Soci\'et\'e Math\'ematique de France},
  59:88--118, 1931.

\bibitem{ChebTerrabRoche2008}
E.~S. Cheb-Terrab and A.~D. Roche.
\newblock Hypergeometric solutions for third order linear odes, 2008.
\newblock \href{http://arxiv.org/abs/0803.3474}{(\texttt{arXiv:0803.3474})}.

\bibitem{ChebTerrab1995102}
E.~S. Cheb-Terrab and K.~von B{\"u}low.
\newblock A computational approach for the analytical solving of partial
  differential equations.
\newblock {\em Computer Physics Communications}, 90(1):102 -- 116, 1995.

\bibitem{EinsteinRelativityAppenix2Supplement}
A.~Einstein.
\newblock {\em Supplement to {A}ppendix {II} of ``{T}he meaning of relativity,
  4th ed.''}.
\newblock Princeton University Press, N. J., 1953.

\bibitem{GaoChowForm}
Xiao-Shan Gao, Wei Li, and Chun-Ming Yuan.
\newblock Intersection theory in differential algebraic geometry: generic
  intersections and the differential {C}how form.
\newblock {\em Trans. Amer. Math. Soc.}, 365(9):4575--4632, 2013.

\bibitem{Hubert1}
E.~Hubert.
\newblock Notes on triangular sets and triangulation-decomposition algorithms.
  {I}. {P}olynomial systems.
\newblock In {\em Symbolic and numerical scientific computation ({H}agenberg,
  2001)}, volume 2630 of {\em Lecture Notes in Comput. Sci.}, pages 1--39.
  Springer, Berlin, 2003.

\bibitem{JohnsonPolynomial}
J.~Johnson.
\newblock Differential dimension polynomials and a fundamental theorem on
  differential modules.
\newblock {\em Amer. J. Math.}, 91:239--248, 1969.

\bibitem{KolPolynomial}
E.~R. Kolchin.
\newblock The notion of dimension in the theory of algebraic differential
  equations.
\newblock {\em Bull. Amer. Math. Soc.}, 70:570--573, 1964.

\bibitem{Kol}
E.~R. Kolchin.
\newblock {\em Differential algebra and algebraic groups}.
\newblock Academic Press, New York, 1973.
\newblock {Pure and Applied Mathematics, Vol. 54}.

\bibitem{DifferentialDimensionPolynomials}
M.~V. Kondratieva, A.~B. Levin, A.~V. Mikhalev, and E.~V. Pankratiev.
\newblock {\em Differential and difference dimension polynomials}, volume 461
  of {\em Mathematics and its Applications}.
\newblock Kluwer Academic Publishers, Dordrecht, 1999.

\bibitem{lhphd}
M.~Lange-Hegermann.
\newblock {\em {Counting Solutions of Differential Equations}}.
\newblock PhD thesis, RWTH Aachen, Germany, 2014.

\bibitem{LevinDimensionPolynomialIntermediateFields}
A.~Levin.
\newblock Dimension polynomials of intermediate fields and {K}rull-type
  dimension of finitely generated differential field extensions.
\newblock {\em Math. Comput. Sci.}, 4(2-3):143--150, 2010.

\bibitem{Rosenfeld}
A.~Rosenfeld.
\newblock Specializations in differential algebra.
\newblock {\em Trans. Amer. Math. Soc.}, 90:394--407, 1959.

\end{thebibliography}

\end{document}